\title{Interpolation for Curves in Projective Space with Bounded Error}
\begin{document}
\maketitle

\begin{abstract}    
Given $n$ general points $p_1, p_2, \ldots, p_n \in \pp^r$
it is natural to ask whether there is a curve of given
degree $d$ and genus $g$ passing through them; by counting dimensions
a natural conjecture is that such a curve exists if and only if
\[n \leq \left\lfloor \frac{(r + 1)d - (r - 3)(g - 1)}{r - 1}\right\rfloor.\]
The case of curves with \emph{nonspecial} hyperplane section was recently
studied in \cite{firstpaper}, where the above conjecture was shown to hold
with exactly three exceptions.

In this paper, we prove a ``bounded-error analog''
for \emph{special} linear series
on general curves; more precisely we show that existence of such a curve subject
to the stronger inequality
\[n \leq \left\lfloor \frac{(r + 1)d - (r - 3)(g - 1)}{r - 1}\right\rfloor - 3.\]
Note that the $-3$ cannot be replaced with $-2$ without introducing
exceptions
(as a canonical curve in $\pp^3$ can only pass through $9$ general points, while
a naive dimension count predicts~$12$).

We also use the same technique to prove that the twist of the normal bundle $N_C(-1)$
satisfies interpolation for curves whose degree is sufficiently large relative to their
genus, and deduce from this that the number of general points contained in
the hyperplane section of a general curve is at least
\[\min\left(d, \frac{(r - 1)^2 d - (r - 2)^2 g - (2r^2 - 5r + 12)}{(r - 2)^2}\right).\]

As explained in \cite{over}, these results play a key role in the author's proof
of the Maximal Rank Conjecture \cite{mrc}.
\end{abstract}

\section{Introduction}

If $C$ is a general curve, equipped with a general map $f \colon C \to \pp^r$ of degree~$d$,
it is natural to ask how many general points are contained in $f(C)$.
This problem has been studied in many cases, including for nonspecial curves \cite{firstpaper},
for space curves \cite{vogt}, and for canonical curves \cite{can}.
To state the problem
precisely, we make the following definition:

\begin{defi}
We say a stable map $f \colon C \to \pp^r$ of degree $d$ from a curve of genus $g$
is a \emph{Brill--Noether curve (BN-curve)} if it is a limit of nondegenerate degree~$d$ maps
$C' \to \pp^r$
with $[C'] \in \bar{M}_g$ of general moduli.

If $[f]$ lies in a unique component of $\bar{M}_g(\pp^r, d)$,
we say $f$ is an \emph{interior curve}.
\end{defi}

\noindent
The celebrated Brill--Noether theorem
then asserts that BN-curves exist if and only if
\[\rho(d, g, r) := (r + 1)d - rg - r(r + 1) \geq 0.\]
Moreover, for $\rho(d, g, r) \geq 0$, there is only one component
of $\bar{M}_g(\pp^r, d)$ (respectively $\bar{M}_{g, n}(\pp^r, d)$)
corresponding to BN-curves (respectively marked BN-curves); we write
$\bar{M}_g(\pp^r, d)^\circ$ (respectively $\bar{M}_{g, n}(\pp^r, d)^\circ$) for that component.
The question posed at the beginning then amounts to asking when
the natural map $\bar{M}_{g, n}(\pp^r, d)^\circ \to (\pp^r)^n$ is dominant.
In order for this to happen, it is evidently necessary for
\[(r + 1) d - (r - 3)(g - 1) + n = \dim \bar{M}_{g, n}(\pp^r, d)^\circ \geq \dim (\pp^r)^n = rn,\]
or equivalently,
\[n \leq \frac{(r + 1)d - (r - 3)(g - 1)}{r - 1}.\]
However, this is not sufficient: When $(d, g, r) = (6, 4, 3)$, the above equation gives $n \leq 12$;
but every canonical curve in $\pp^3$ lies on a quadric, and so can only pass through $9$ general points
(three less than expected).
Our main theorem implies that the above condition is ``as close as possible to sufficient
given the above example'' --- a bound which is (barely!) good enough
to prove the Maximal Rank Conjecture, as explained in \cite{over}:

\begin{thm} \label{main}
There exists a BN-curve of degree $d$ and genus $g$ in $\pp^r$
(with $\rho(d, g, r) \geq 0$), passing through $n$ general points, if
\[(r - 1) n \leq (r + 1)d - (r - 3)(g - 1) - 2r.\]
In particular, such a curve exists so long as
\[n \leq \frac{(r + 1)d - (r - 3)(g - 1)}{r - 1} - 3.\]
\end{thm}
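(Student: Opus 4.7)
The plan is to translate the theorem into a statement about the normal sheaf $N_f$ of a BN-curve $f \colon C \to \pp^r$ and then establish it by induction on $(d, g, r)$ via degeneration.

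First, I would recall that, by deformation theory, the existence of a BN-curve through $n$ general points reduces to showing that the evaluation map $H^0(N_f) \to N_f|_\Gamma$ is surjective for a general effective divisor $\Gamma \subset C$ of degree $n$ (equivalently, that $N_f$ satisfies interpolation at $n$ points). A standard computation from the Euler and normal-bundle sequences gives $\chi(N_f) = (r+1)d - (r-3)(g-1)$, while $N_f|_\Gamma$ has total dimension $(r-1)n$, so the hypothesis $(r-1)n \leq (r+1)d - (r-3)(g-1) - 2r$ produces $2r$ extra sections beyond what is needed for surjectivity --- precisely the slack that will drive the induction.

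For the inductive step I would degenerate $C$ to a reducible nodal BN-curve $X = Y \cup_\Delta Z$, where $Y$ is a BN-curve of strictly smaller invariants (to which the inductive hypothesis applies) and $Z$ is a simple attached component --- a line, a rational curve, or a low-degree rational normal curve --- meeting $Y$ transversely along a small gluing divisor $\Delta$. Partitioning the $n$ general points as $\Gamma = \Gamma_Y \sqcup \Gamma_Z$ and using a normal-sheaf sequence of the form
\[0 \to N_{f|_Y}(-\Delta) \oplus N_{f|_Z}(-\Delta) \to N_f \to T^1_X \to 0,\]
the interpolation problem on $X$ decomposes into one on $Y$ with $n_Y = |\Gamma_Y|$ points and one on $Z$ with $n_Z = |\Gamma_Z|$ points, plus a mild matching condition at the nodes. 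The inductive hypothesis handles $Y$, the attached $Z$ is handled directly by known interpolation for rational curves, and upper semicontinuity transfers the conclusion back to a smooth BN-curve.

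The main obstacle will be the arithmetic of choosing, for every admissible $(d, g, r, n)$, an attachment $(Z, \Delta)$ for which the residual $(d_Y, g_Y)$ together with a partition $n = n_Y + n_Z$ simultaneously satisfies the inductive hypothesis (with the same $-2r$ slack) and the Brill--Noether condition $\rho(d_Y, g_Y, r) \geq 0$; the bound $-2r$ is tight because each attachment absorbs roughly $r$ units of slack through its contribution to the node sequence and the decrease in $(d_Y, g_Y)$. Finally, one must treat a finite list of base cases directly: small $g$ (where the nonspecial result of \cite{firstpaper} applies), small $r$ (where exceptional triples such as $(6, 4, 3)$ must be avoided by the choice of degeneration), and the boundary cases where the hypothesis inequality is an equality.
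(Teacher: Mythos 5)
Your translation into deformation theory and the Euler-characteristic count of the needed slack are both correct, and the paper also proceeds by degeneration and induction. But the specific degeneration you propose --- attaching a line, rational curve, or low-degree rational normal curve $Z$ along a small divisor $\Delta$ --- is not the one the paper uses, and it does not plausibly close the argument. The difficulty is that the theorem's content lies in the \emph{special} range $d < g + r$, while the available interpolation input (Corollary~1.4 of~\cite{firstpaper}) applies to \emph{nonspecial} curves. Detaching a line or a low-degree rational curve through a few nodes does not move the residual curve $Y$ into the nonspecial range (indeed it tends to decrease $\rho$ without improving $d - g$), so the induction has no base case reachable by your moves. Your sequence $0 \to N_{f|_Y}(-\Delta)\oplus N_{f|_Z}(-\Delta)\to N_f\to T^1_X\to 0$ also does not give a clean reduction: what one actually needs are the elementary-modification sequences relating $N_f|_Y$ to $N_{f|_Y}$ at the nodes, and for a rational normal curve $Z$ of degree $r$ attached at $r+2$ nodes the piece $N_f|_Z(-\Delta)$ has negative Euler characteristic for $r\geq 4$, so $H^1$ does not vanish on that side.

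The missing structural idea is the one the paper takes from~\cite{quadrics}: degenerate to $f\colon C\cup_\Gamma D\to\pp^r$ where the component $D$ maps into a hyperplane $H$. Then Lemma~2.7 of~\cite{quadrics} (Lemma~\ref{lm27}) replaces the hard piece $N_f|_D$ by the pair $\bigl(N_{f_D}, \oo_D(1)\bigr)$ --- the normal bundle of $D$ inside $H=\pp^{r-1}$, together with a line bundle --- which is far more tractable. Crucially, the genera and degrees can be arranged so that \emph{both} $f|_C$ in $\pp^r$ and $f_D$ in $H$ are nonspecial (in the paper $\deg f_D - g_D = r - 1$ and $\deg f|_C - g_C = r$), so the interpolation statements for the pieces reduce to Corollary~1.4 of~\cite{firstpaper}. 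The paper's Lemma~\ref{lm:donemain} then constructs such degenerate BN-curves $C\cup_\Gamma D$ by induction on $d$, and a single stray case $(d,g,r)=(11,7,5)$ is handled by hand at the end. Without the degeneration-into-a-hyperplane trick and Lemma~\ref{lm27}, your plan does not reduce to the known nonspecial results and the induction does not terminate.
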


\begin{rem}
There are examples of smooth curves passing through many more general points
than the formula given in Theorem~\ref{main}, given by complete intersections.
However such curves are not useful for studying either
the extrinsic geometry of general curves (e.g.\ in the proof of the Maximal Rank Conjecture c.f.\ \cite{over}),
nor for studying the intrinsic geometry of general curves and the geometry of $\bar{M}_g$
(e.g.\ in the construction of moving curves in $\bar{M}_g$ c.f.\ \cite{nasko}).
For such applications, it is essential to know that the smooth curves
found passing through the given general points are BN-curves,
which is exactly what is guaranteed by our theorem.
\end{rem}

Using similar techniques, we also study the analogous question for hyperplane sections:
For a hyperplane $H$, we ask how many general points in $H$
are contained in $f(C) \cap H$. 
This problem has been studied for $r \leq 4$ in \cite{quadrics},
but remains open in higher-dimensional projective spaces.
As with the case of general points in $f(C)$, it is evidently necessary for $n \leq d$ and
\[(r + 1)d - (r - 3)(g - 1) = \dim \bar{M}_g(\pp^r, d)^\circ \geq \dim H^n + \dim G = (r - 1)n + (r + 1),\]
where $G \subset \aut \pp^r$ denotes the subgroup
fixing $H$ pointwise;
or upon rearrangement,
\[n \leq \min\left(d, \frac{(r + 1)d - (r - 3)g - 4}{r - 1}\right).\]
Again this is not sufficient, for a similar reason:
When $(d, g, r) = (6, 4, 3)$, the above equation gives $n \leq 6$;
but every canonical curve in $\pp^3$ lies on a quadric, so its hyperplane section lies on a conic,
and can thus only pass through $5$ general points.
Our second main theorem implies the above condition is nevertheless also not far from sufficient:

\begin{thm} \label{main-1s}
The hyperplane section of a general BN-curve of degree $d$ and genus $g$ in $\pp^r$
contains $d - n$ general points (with $0 \leq n \leq d$) if
\[(2r - 3)(d + 1) - (r - 2)^2 (g - n) - 2r^2 + 3r - 9 \geq 0.\]
Or equivalently, the number of general points contained in the hyperplane section
of a general BN-curve of degree $d$ and genus $g$ in $\pp^r$ is at least
\[\min\left(d, \frac{(r - 1)^2 d - (r - 2)^2 g - (2r^2 - 5r + 12)}{(r - 2)^2}\right).\]
\end{thm}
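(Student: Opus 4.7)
The strategy follows the two-step program announced in the abstract: first, establish interpolation for the twisted normal bundle $N_C(-1)$ on a general BN-curve (in a suitable range); second, translate this bundle-cohomological statement into the geometric claim about hyperplane sections via a kernel-sheaf argument. I expect the bulk of the work, and the main obstacle, to lie in the first step.

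\textbf{Step 1 (interpolation for $N_C(-1)$).} The goal is to show that, for $(d,g,r)$ in the range needed for Theorem~\ref{main-1s} and every general effective divisor $D$ on $C$, the bundle $N_C(-1)(-D)$ has the expected cohomology (i.e., one of $H^0$, $H^1$ vanishes). My approach is induction on $(d,g)$ via degeneration. Specialize $C$ to a nodal curve $C_0 \cup_\Gamma C_1$ whose components are BN-curves of strictly smaller invariants for which $N_{C_i}(-1)$ is already known to satisfy interpolation. The normal bundle of the reducible curve fits into the standard exact sequence involving $N_{C_0}$, $N_{C_1}$, and correction terms at the nodes $\Gamma$; twisting by $\mathcal{O}(-1)$ and by a general effective divisor and chasing through cohomology reduces interpolation for $N_C(-1)$ to the inductive hypothesis plus local vanishings at the nodes. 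Semicontinuity then propagates the conclusion back to the general smooth BN-curve. The delicate work is choosing degenerations that both remain BN and yield the right numerics in the induction; this combinatorial bookkeeping is where I expect most of the effort to go.

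\textbf{Step 2 (deduction of Theorem~\ref{main-1s}).} Fix a general BN-curve $C \subset \pp^r$ and a general hyperplane $H$, so that $C \cap H = p_1 + \cdots + p_d$ transversally. The goal is to deform $C$ through BN-curves so that a chosen subset of $d-n$ intersection points attains any prescribed general configuration $(q_1,\ldots,q_{d-n}) \in H^{d-n}$. At each $q \in C \cap H$, the canonical surjection $N_C|_q \to N_{H/\pp^r}|_q \cong \mathcal{O}(1)|_q$ picks out the normal-to-$H$ direction; letting $K$ denote the kernel of the global map
\[
0 \to K \to N_C \to \bigoplus_{i=1}^{d-n} \mathcal{O}(1)|_{q_i} \to 0,
\]
sections of $K$ parameterize deformations keeping the prescribed $q_i$ on $H$. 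Dominance of the resulting map to $H^{d-n}$ is implied by $H^1(K(-q_1-\cdots-q_{d-n})) = 0$. Chasing through the exact sequence above and using the identity $q_1+\cdots+q_{d-n} = H|_C - (p_{d-n+1}+\cdots+p_d)$, this vanishing reduces to $H^1(N_C(-1)(p_{d-n+1}+\cdots+p_d)) = 0$; since $H$ is general, $p_{d-n+1} + \cdots + p_d$ is a general effective divisor of degree $n$ on $C$, so this follows from Step~1. Matching the numerical range of Step~1 to the added degree-$n$ divisor produces precisely the inequality of Theorem~\ref{main-1s}, and the alternative $\min$-form is an algebraic rearrangement.
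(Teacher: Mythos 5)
Your proposal has a genuine gap: the numerical range in Theorem~\ref{main-1s} is strictly wider than the range in which you can hope to prove interpolation for $N_C(-1)$, so your Step~2 cannot produce the stated inequality from your Step~1. Concretely, interpolation for $N_C(-1)$ (the paper's Theorem~\ref{main-1}) is an $n$-\emph{independent} statement, established here under
\[(2r-3)d - (r-2)^2 g - 2r^2 + 3r - 9 \geq 0,\]
whereas Theorem~\ref{main-1s} is claimed under the strictly weaker (for $n>0$) hypothesis
\[(2r-3)(d+1) - (r-2)^2 (g - n) - 2r^2 + 3r - 9 \geq 0,\]
which buys you an extra $(r-2)^2 n + (2r-3)$ of slack. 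Your Step~2 reduction to $H^1(N_C(-1)(p_{d-n+1}+\cdots+p_d))=0$ — a twist \emph{up} by an effective divisor — only becomes easier than $H^1(N_C(-1))=0$ and hence gains nothing from $n$: it still requires $N_C(-1)$ to satisfy interpolation in the first place. For example, with $r=5$, $g=100$, $n=50$, Theorem~\ref{main-1s} requires roughly $d \geq 70$ while interpolation for $N_C(-1)$ requires roughly $d \geq 135$; your argument says nothing for $d\in[70,134]$.

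The paper bridges this gap with a separate inductive machine (Section~4) that is absent from your sketch. It introduces the $n$-dependent notion ``$(d,g,r,n)$ is excellent'' (interpolation for $N_f(-D)$ with $D$ of degree $d-n$ in a hyperplane section) alongside the geometric notion ``good,'' and then proves good-ness by attaching auxiliary curves — canonical curves in a second hyperplane (Lemma~\ref{addcan}), rational curves of degree $\lceil (r-2)/2\rceil$ and lines (Lemmas~\ref{addhalf}, \ref{addline}, \ref{addline-1}, \ref{exc-good}) — to reduce $(d,g,n)$ step-by-step to base cases where Proposition~\ref{base:excellent} or Theorem~\ref{main-1} applies, finishing with a finite computer check. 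A secondary issue: your kernel-sheaf criterion $H^1(K(-\sum q_i))=0$ with $K = \ker\bigl(N_C \to \bigoplus \mathcal{O}(1)|_{q_i}\bigr)$ is not the deformation-theoretic criterion the paper uses (it works with $H^1(N_f(-1)(-p_1-\cdots-p_n))=0$ for the map to $(\pp^r)^n \times \sym^d H$) and would need its own justification; but even granting it, the numerical mismatch above is fatal to the proposal as written.
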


These theorems are proven by studying the normal bundle of the general marked
BN-curve $f \colon (C, p_1, p_2, \ldots, p_n) \to \pp^r$. Namely, let $C$ be a nodal curve,
$p_1, p_2, \ldots, p_n$ be smooth points of $C$, and $f \colon C \to \pp^r$
be a map.
As long as $f$ is unramified,
basic deformation theory (which is a special case of more general results of \cite{bf} and \cite{b},
as explained in Section~2 of~\cite{ghs}) implies the map
\[f \mapsto (f(p_1), f(p_2), \ldots, f(p_n))\]
from the corresponding Kontsevich space to $(\pp^r)^n$ is smooth at $[f]$ if
\[H^1(N_f(-p_1-\cdots -p_n)) = 0.\]
Here, $N_f$ denotes the normal bundle of the map $f \colon C \to \pp^r$,
which is defined as
\[N_f = \ker(f^* \Omega_{\pp^r} \to \Omega_C)^\vee.\]

The conclusions of Theorems~\ref{main} and~\ref{main-1s}
are trivially always true when $r = 1$ (in which case $f$ is surjective);
we may therefore suppose $r \geq 2$, which implies
a general BN-curve is unramified.
Since a map between irreducible varieties is dominant if it is
generically smooth,
Theorem~\ref{main} then reduces to the assertion that
$H^1(N_f(-p_1-\cdots -p_n)) = 0$
for $f \colon (C, p_1, p_2, \ldots, p_n) \to \pp^r$ a general marked BN-curve. This condition is visibly
open, so to prove Theorem~\ref{main} it suffices to exhibit an unramified marked
BN-curve $f \colon (C, p_1, p_2, \ldots, p_n) \to \pp^r$, of each degree $d$ and genus $g$
satisfying the assumptions of Theorem~\ref{main},
for which
\begin{equation} \label{h10}
H^1(N_f(-p_1-\cdots -p_n)) = 0.
\end{equation}

In proving Theorem~\ref{main-1s}, we first consider
a related problem: We ask when a general BN-curve has
general hyperplane section, and in that case through
how many additional independently general points
it passes.
To study this question, we note that, as above,
so long as $f$ is unramified and transverse to a hyperplane $H$,
the map
\[f \mapsto (f(p_1), f(p_2), \ldots, f(p_n), f(C) \cap H)\]
from the corresponding Kontsevich space to $(\pp^r)^n \times \sym^d H$ is smooth at $[f]$ if
\begin{equation} \label{h11}
H^1(N_f(-1)(-p_1-\cdots -p_n)) = 0.
\end{equation}

Conditions \eqref{h10} and \eqref{h11} above
are closely related to the property of \emph{interpolation} for the normal bundle $N_f$
and its twist $N_f(-1)$:

\begin{defi}
We say that a vector bundle $\mathcal{E} \to C$ on a curve $C$ \emph{satisfies interpolation}
if, for a general effective divisor $D$ of any degree,
\[H^0(\mathcal{E}(-D)) = 0 \tor H^1(\mathcal{E}(-D)) = 0.\]
\end{defi}

Note that if $\mathcal{E} \to C$ satisfies interpolation,
then $H^1(\mathcal{E}(-p_1-\cdots-p_n)) = 0$ for general points $p_1, p_2, \ldots, p_n \in C$
if and only if $n \leq \chi(\mathcal{E}) / \rk(\mathcal{E})$.
The above argument therefore shows that:
\begin{itemize}
\item If $N_f$ satisfies interpolation for $f$ a general BN-curve of degree $d$
and genus $g$, then $f(C)$ can pass through $n$ general points if and only if
\[n \leq \frac{(r + 1) d - (r - 3)(g - 1)}{r - 1}.\]
\item If $N_f(-1)$ satisfies interpolation for $f$ a general BN-curve of degree $d$
and genus $g$, then $f(C)$ has a general hyperplane section, and passes through $n$
additional general points (independent of its hyperplane section) if and only if
\[n \leq \frac{2d - (r - 3)(g - 1)}{r - 1}.\]
\end{itemize}

We now state the following theorem on interpolation
for the twist of the normal bundle, from which Theorem~\ref{main-1s} will be deduced:

\begin{thm} \label{main-1}
If $f \colon C \to \pp^r$ is a general BN-curve of degree $d$ and genus $g$,
then $N_f(-1)$ satisfies interpolation provided that
\[(2r - 3)d - (r - 2)^2 g - 2r^2 + 3r - 9 \geq 0.\]
\end{thm}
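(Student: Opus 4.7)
The plan is to prove Theorem~\ref{main-1} by induction using a degeneration argument, along the lines of the approach developed for the nonspecial case in \cite{firstpaper}. Since the interpolation property is open in the moduli of BN-curves, it suffices to exhibit, for each $(d, g)$ satisfying the numerical hypothesis, a single nodal BN-curve $f \colon C \to \pp^r$ whose twisted normal bundle $N_f(-1)$ satisfies interpolation.

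First, I would reduce interpolation for $N_f(-1)$ to a single cohomology vanishing. A Riemann--Roch calculation yields $\chi(N_f(-1)) = 2d - (r-3)(g-1)$, so interpolation for $N_f(-1)$ is equivalent (by choosing the critical number of points) to $H^1(N_f(-1)(-D)) = 0$ for a general effective divisor $D$ of degree $\lfloor (2d - (r-3)(g-1))/(r-1) \rfloor$, together with the complementary $H^0$-vanishing one degree higher.

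The inductive step would then specialize $f$ to a reducible stable map $f_0 \colon C_0 \to \pp^r$ with $C_0 = X \cup Y$, where $Y$ is a small rational curve (a line, conic, or rational normal curve attached as a multi-secant) and $X$ is a BN-curve of smaller degree and genus. Using the standard restricted normal bundle sequences on $X \cup Y$, together with the fact that $N_{f_0}|_X$ is an elementary modification of $N_{f_0|_X}$ at the nodes (modified by the tangent directions to $Y$), the vanishing of $H^1(N_{f_0}(-1)(-D))$ decouples into cohomology vanishings on the components separately. On $Y$ the vanishing is explicit, since the bundle there splits as a direct sum of line bundles on $\pp^1$. On $X$ one is left with an interpolation problem for a modified bundle of the same shape as $N_f(-1)$, which I would address by a strengthened inductive hypothesis that tracks such modifications alongside the bundle itself.

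The principal obstacle is ensuring the inductive step preserves the numerical hypothesis $(2r-3)d - (r-2)^2 g - 2r^2 + 3r - 9 \geq 0$. Under a reduction $(d, g) \mapsto (d - \delta_d, g - \delta_g)$, the left-hand side shifts by $(2r-3)\delta_d - (r-2)^2 \delta_g$, which can be positive, negative, or zero depending on both the degeneration and on $r$: a secant line gives $(\delta_d, \delta_g) = (1, 1)$, which only behaves well for small $r$, while a higher-degree rational normal curve attached at more nodes gives other options. I therefore expect the argument to involve a case analysis---choosing the attachment so that $(\delta_d, \delta_g)$ lies on, or just favorably away from, the line $(2r-3)\delta_d = (r-2)^2 \delta_g$---supplemented by a few base cases (low $g$ or small excess in the hypothesis) handled by direct construction or by appeal to the nonspecial results of \cite{firstpaper}. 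Managing this numerical balance simultaneously with the elementary modifications at the nodes, so that the restricted problem on $X$ remains of a shape amenable to the inductive hypothesis, is the delicate bookkeeping at the heart of the proof.
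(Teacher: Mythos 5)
Your plan heads in a genuinely different direction from the paper, and I think it runs into an obstruction you have underestimated. The paper does not induct on $(d,g)$ by attaching small rational curves and tracking elementary modifications of $N_f(-1)$. Instead, it performs a \emph{one-shot} degeneration to a reducible map $f^\circ \colon C \cup_\Gamma D \to \pp^r$ in which $f^\circ|_D$ factors through a hyperplane $H$, $f^\circ|_C$ is transverse to $H$ along $\Gamma$, and---crucially---\emph{both} $f^\circ|_C$ and $f^\circ|_D$ land in the nonspecial range. Lemma~2.7 of \cite{quadrics} (recalled as Lemma~\ref{lm27}) then converts $H^i(N_{f^\circ}(-E-F)) = 0$ into three clean vanishings: one for $N_{f_D}$, one for $\oo_D(1)$, and one for $N_{f^\circ|_C}$, with no elementary modifications to track at all. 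The hyperplane structure is what makes this clean: since $D \subset H$, the modifications of $N_{f^\circ}|_C$ at $\Gamma$ are controlled by the line bundle $\oo_C(1)$, and the $(-1)$ twist is absorbed compatibly on both sides, so the statement reduces to interpolation for \emph{nonspecial} curves, already established in \cite{firstpaper} (Proposition~\ref{base:excellent}). The rest is a combinatorial check that a suitable $t$ (the genus of $D$) exists subject to the numerical constraints.

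Your proposal has two concrete gaps. First, the numerical issue you flag is in fact fatal for the specific degeneration you propose: attaching a $(k+2)$-secant rational normal curve of degree $k$ shifts the left-hand side of the hypothesis by $(2r-3)k - (r-2)^2(k+1)$, which is strictly negative for every $k \geq 1$ once $r \geq 5$ (e.g.\ a secant line gives $-(r^2 - 6r + 7) < 0$ for $r \geq 5$). So every such step consumes slack, and there is no ``favorable'' choice of $(\delta_d,\delta_g)$; the induction cannot descend to any base case inside the special range. Second, ``tracking modifications alongside the bundle itself'' is not a minor bookkeeping device---it is exactly the combinatorial explosion that the hyperplane-degeneration trick was invented to sidestep, and you do not propose a mechanism for making it terminate. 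The missing idea is precisely the degeneration to a component contained in a hyperplane, which simultaneously avoids the modifications and transports the problem out of the special range in one step.
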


Note that Theorem~\ref{main-1} is trivial when $r = 1$
(in which case $N_f = 0$).
For $r = 2$, Theorems~\ref{main}, \ref{main-1s}, and \ref{main-1}
follow immediately from the above discussion,
once we note that $N_f \simeq K_C(3)$ and $N_f(-1) \simeq K_C(2)$
are nonspecial line bundles so in particular satisfy interpolation.
More precise versions of Theorems~\ref{main}, \ref{main-1s}, and \ref{main-1}
are already known for $r = 3$ by work of Vogt \cite{vogt},
and for $r = 4$ by work of the author and Vogt \cite{p4}.
\emph{We may therefore assume for simplicity that $r \geq 5$
for the remainder of the paper.}
(Although we note that with a bit more care,
the techniques used here apply to lower
values of $r$ too; in particular, they cannot be used to prove
a sharper version of Theorem~\ref{main} with the $-3$ replaced by a $-2$,
as that would contradict the known counterexample with $r = 3$ mentioned above.)

The key idea to prove our main theorems is to degenerate $f$
to a map $f^\circ \colon C \cup_\Gamma D \to \pp^r$ from a reducible
curve, so that $f^\circ|_C$ and $f^\circ|_D$ are both nonspecial,
and so that $f^\circ|_D$ factors through a hyperplane $H$.
We then use a trick of \cite{quadrics} (which we recall
as Lemma~\ref{lm27} below) to reduce the desired statements
to facts about the normal bundles of $f^\circ|_C$ and $f^\circ|_D$,
which then follow from results of \cite{firstpaper} on interpolation
for nonspecial curves.

\paragraph{Note:} Throughout this paper, we work over an algebraically
closed field of characteristic zero.
Unless otherwise specified, all curves are assumed to be nodal.

\subsection*{Acknowledgements}

The author would like to thank Joe Harris for
his guidance throughout this research,
as well as other members of the Harvard and MIT mathematics departments
for helpful conversations.
The author would also like
to acknowledge the generous
support both of the Fannie and John Hertz Foundation,
and of the Department of Defense
(NDSEG fellowship).
Finally, the author would like to thank the anonymous referee
for various helpful comments on the manuscript.

\section{Proof of Theorem~\ref{main}}

In this section, we prove Theorem~\ref{main}.
Since Theorem~\ref{main}
holds when $d \geq g + r$ by Corollary~1.4 of~\cite{firstpaper},
we suppose that $d < g + r$ in this section.
The key input in our proof of Theorem~\ref{main} will be the following lemma from \cite{quadrics}:

\begin{lm}[Lemma~2.7 of~\cite{quadrics}] \label{lm27}
Let $f \colon C \cup_\Gamma D \to \pp^r$ be an unramified map from a nodal curve,
such that $f|_D$ factors as a composition of $f_D \colon D \to H$ with the inclusion of a hyperplane $\iota \colon H \subset \pp^r$,
while $f|_C$ is transverse to $H$ along $\Gamma$.

Let $E$ and $F$ be Cartier 
divisors supported on $C \smallsetminus \Gamma$ and $D \smallsetminus \Gamma$
respectively.
Suppose that, for some $i \in \{0, 1\}$,
\[H^i(N_{f_D}(-\Gamma-F)) = H^i(\oo_D(1)(\Gamma-F)) = H^i(N_{f|_C} (-E)) = 0.\]
Then we have
\[H^i(N_f(-E-F)) = 0.\]
\end{lm}

\begin{cor} \label{cor:needformain}
Let $f \colon C \cup_\Gamma D \to \pp^r$ be an interior BN-curve of the form appearing in
Lemma~\ref{lm27} (first paragraph), such that $f|_C$ and $f_D$ are BN-curves;
$f(\Gamma)$ is a set of $s + r$ general points in $H$,
where $s = g + r - d$;
and $D$ is of genus $(r - 2)t$ and $f|_D$ is of degree $(r - 2)t + r - 1$,
for some integer $t$.

If $s - 1 \leq 2t \leq r + s + 1$, and $d - (r - 2)t - 2r + 1 \neq 2$ if $r = 5$,
then Theorem~\ref{main} holds for curves of degree $d$ and genus $g$ in $\pp^r$.
\end{cor}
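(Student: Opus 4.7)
The plan is to apply Lemma~\ref{lm27} with $i = 1$ to the given reducible BN-curve $f\colon C \cup_\Gamma D \to \pp^r$. By semicontinuity and the reduction discussed in the introduction, to prove Theorem~\ref{main} for the given $(d, g, r)$ it suffices to exhibit an effective divisor $E + F$ of degree $n$, with $E$ supported on $C \smallsetminus \Gamma$ and $F$ on $D \smallsetminus \Gamma$, such that $H^1(N_f(-E-F)) = 0$. Lemma~\ref{lm27} reduces this to the three vanishings
\[H^1(N_{f|_C}(-E)) = H^1(N_{f_D}(-\Gamma - F)) = H^1(\oo_D(1)(\Gamma - F)) = 0.\]

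First I would record the numerical invariants forced by the data: from $g(C) + g(D) + |\Gamma| - 1 = g$ and $\deg(f|_C) + \deg(f_D) = d$, together with the given values $|\Gamma| = s + r$, $g(D) = (r-2)t$, $\deg(f_D) = (r-2)t + (r-1)$, one obtains
\[\deg(f|_C) = d - (r-2)t - (r-1), \qquad g(C) = d - (r-2)t - 2r + 1,\]
so that both $f|_C$ and $f_D$ are nonspecial BN-curves, each sitting at the boundary $\deg = g + r'$ of the nonspecial range in its ambient projective space. The first two vanishings therefore follow from the interpolation results for nonspecial curves in \cite{firstpaper} applied to $N_{f|_C}$ in $\pp^r$ and to $N_{f_D}$ in $\pp^{r-1}$; in applying the latter I note that $\Gamma$ may be treated as a general effective divisor on $D$, since $D$ is only constrained to pass through $f(\Gamma) \subset H$, a general set of points on $H$. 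The hypothesis $d - (r-2)t - 2r + 1 \neq 2$ when $r = 5$ is precisely what is needed to avoid the exceptional case $(g(C), r) = (2, 5)$ in \cite{firstpaper}. For the third vanishing, since $\Gamma$ and $F$ are independently general and $|\Gamma|$ is large enough, the twist $\oo_D(1)(\Gamma - F)$ is a general element of its component of the Picard variety, so $H^1$ vanishes as soon as $\deg(f_D) + |\Gamma| - |F| \geq g(D) - 1$, an inequality on $|F|$.

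The remaining task is to verify that any $n$ satisfying $(r-1)n \leq (r+1)d - (r-3)(g-1) - 2r$ admits a decomposition $n = |E| + |F|$ respecting the three numerical bounds
\[|E| \leq \chi(N_{f|_C})/(r-1), \qquad |\Gamma| + |F| \leq \chi(N_{f_D})/(r-2), \qquad |F| \leq 2r + s\]
derived above. This is a pure arithmetic check in $d, g, r, t, s$, and the constraints $s - 1 \leq 2t \leq r + s + 1$ of the corollary are exactly what is needed for these bounds on $|E|$ and $|F|$ to jointly cover the claimed range of $n$: the upper bound on $2t$ leaves room in the second inequality, while the lower bound leaves room in the first. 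This numerical bookkeeping, together with careful treatment of floor functions and the interpolation exceptions in \cite{firstpaper}, is the main obstacle of the proof.
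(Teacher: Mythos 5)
Your proposal follows the same route as the paper: deform to make $f|_C$ and $(f_D, \Gamma)$ general, apply Lemma~\ref{lm27} with $i=1$, and reduce the three $H^1$-vanishings to degree bounds on $E$ and $F$ via Corollary~1.4 of \cite{firstpaper}; you correctly compute the numerical invariants of $f|_C$ and $f_D$, observe both sit at the nonspecial boundary, and identify the role of the $r = 5$ exception. What is missing is the concluding arithmetic, which you yourself flag as ``the main obstacle'' but do not carry out --- and that is exactly where the hypothesis $s - 1 \leq 2t \leq r + s + 1$ actually enters. The paper's concrete move is to set $\deg F = 4t + 2 - s$, saturating the bound coming from $N_{f_D}$ (your second inequality); with that choice, the $\oo_D(1)$ bound (your third) rearranges to the upper constraint on $2t$, and the $N_{f|_C}$ bound (your first) rearranges to $2t \geq s - 1$. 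Note that your text attributes the upper bound on $2t$ to the second inequality, but that one is made an equality by the choice of $\deg F$; it is the third, $|F| \leq 2r + s$, that produces the upper constraint. Until a specific $\deg F$ is chosen and the resulting inequalities unwound, the proposal is an outline of the paper's proof rather than a complete argument.
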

\begin{proof}
Note that our assumptions force $C$ to be of genus $g + 1 - (r - 2)t - s - r = d - (r - 2)t - 2r + 1$,
and $f|_C$ to be of degree $d - ((r - 2)t + r - 1) = d - (r - 2)t - r + 1$.
In particular, $f|_C$ is nonspecial.

Since $f$ is an interior curve, and $\Gamma$ is a general set of points,
we may deform $f$
to assume that $(f_D, \Gamma)$ is general in the component of $M_{(r - 2)t, s + r}(H, (r - 2)t + r - 1)$
corresponding to BN-curves,
and that $f|_C$ is general in the component of $M_{d - (r - 2)t - 2r + 1}(\pp^r, d - (r - 2)t - r + 1)$
corresponding to BN-curves.

By Lemma~\ref{lm27}, it suffices to produce Cartier divisors $E$ and $F$,
supported on $C \smallsetminus \Gamma$ and $D \smallsetminus \Gamma$
respectively, satisfying
\[\deg E + \deg F = \left\lfloor \frac{(r + 1)d - (r - 3)(g - 1)}{r - 1} - \frac{2r}{r - 1}\right\rfloor,\]
for which
\begin{equation} \label{n1}
H^1(N_{f_D}(-\Gamma-F)) = H^1(\oo_D(1)(\Gamma-F)) = H^1(N_{f|_C} (-E)) = 0.
\end{equation}

By Corollary~1.4 of~\cite{firstpaper} for $f_D$, and for $f|_C$
(except when $r = 5$ and
and $d - (r - 2)t - 2r + 1 = 2$ which does not hold by assumption),
when $E$ and $F$ are general divisors
the conditions \eqref{n1} reduce to:
\begin{align}
\deg F + s + r &\leq \frac{r \cdot ((r - 2)t + r - 1) - (r - 4)((r - 2)t - 1)}{r - 2} = r + 2 + 4t \label{i1} \\
\deg F - s - r &\leq ((r - 2)t + r - 1) + 1 - (r - 2)t = r \label{i2} \\
\begin{split}
\deg E &\leq \frac{(r + 1) (d - (r - 2)t - r + 1) - (r - 3)(d - (r - 2)t - 2r)}{r - 1} \label{i3}\\
&= \frac{4d - (4r - 8)t + r^2 - 6r + 1}{r - 1}.
\end{split}
\end{align}
We will make \eqref{i1} an equality by choosing $\deg F = 4t + 2 - s$;
upon rearrangement, \eqref{i2} becomes $2t \leq r + s + 1$, which holds by assumption.
Finally, \eqref{i3} becomes
\[\left\lfloor \frac{(r + 1)d - (r - 3)(g - 1)}{r - 1} - \frac{2r}{r - 1}\right\rfloor - (4t + 2 - s) \leq \frac{4d - (4r - 8)t + r^2 - 6r + 1}{r - 1};\]
this in turn follows from
\[\frac{(r + 1)d - (r - 3)(g - 1)}{r - 1} - \frac{2r}{r - 1} - (4t + 2 - s) \leq \frac{4d - (4r - 8)t + r^2 - 6r + 1}{r - 1},\]
or upon rearrangement, $2t \geq s - 1$, which also holds by assumption.
\end{proof}

Our goal is thus to construct curves satisfying the assumptions of Corollary~\ref{cor:needformain}.
For this purpose,
we will first need the following lemma:

\begin{lm} \label{lm:aut} Let $f \colon C \to \pp^r$ be an unramified map from a curve
with $H^1(N_f) = 0$.
If $\Gamma \subset C$ is a set of $n \leq r + 2$ points with $f(\Gamma)$
in linear general position, then $H^1(N_f(-\Gamma)) = 0$.
\end{lm}
\begin{proof}
The (long exact sequence in cohomology attached to the)
short exact sequence of sheaves
\[0 \to N_f(-\Gamma) \to N_f \to N_f|_\Gamma \to 0\]
reduces our problem to showing
$H^0(N_f) \to H^0(N_f|_\Gamma)$
is surjective. For this, we use the commutative diagram
\[\begin{CD}
H^0(T_{\pp^r}) @>>> H^0(T_{\pp^r}|_{f(\Gamma)}) \\
@VVV @VVV  \\
H^0(N_f) @>>> H^0(N_f|_\Gamma).
\end{CD}\]
The top horizontal map is surjective since $n \leq r + 2$ and $f(\Gamma)$ is in linear general position
by assumption, and the right vertical map is always surjective.
Consequently, the bottom horizontal map is surjective as desired.
\end{proof}

With this out of the way, the construction of $f$
can be done in most cases by the following lemma:

\begin{lm} \label{lm:donemain} There exists a reducible interior BN-curve $f\colon C \cup_\Gamma D \to \pp^r$
of the form appearing in Corollary~\ref{cor:needformain} (first paragraph),
with $t = \lfloor s/2 \rfloor$.
\end{lm}
\begin{proof}
We argue by induction on $d$, for a stronger hypothesis: That such a curve $f$
exists which, in addition, satisfies:
\begin{itemize}
\item $f(C)$ passes through $2$ (if $s$ is odd) or $1$ (if $s$ is even) points in $\pp^r$ that are
general, independently from $f(\Gamma)$;
\item $f(D)$ passes through a point in $H$ that is
general, independently from $f(\Gamma)$ and the above general point in $f(C)$, provided that $s$ is even;
\item and $H^1(N_f) = 0$.
\end{itemize}
First we consider the case $\rho(d, g, r) = 0$, which implies $d = r(s + 1)$ and $g = (r + 1)s$.

When $s = 1$, we take $f|_C$ to be a general elliptic normal curve --- which
has a general hyperplane section, and passes through $2$ additional
independently general points in $\pp^r$ as required, by Lemma~6.1 of~\cite{rbn}.
We let $f_D$ be a rational normal curve in $H$
passing through all points of intersection of $f|_C(C)$ with $H$. The union is a BN-curve by Theorem~1.7 of~\cite{rbn},
which is an interior curve satisfying $H^1(N_f) = 0$
by combining Lemmas~3.2, 3.3, and~3.4 of~\cite{rbn}.

For the inductive argument, we assume the given statement for $s - 1$
and seek to verify it for $s$.
Let $f_0 \colon C_0 \cup_{\Gamma_0} D_0 \to \pp^r$ be such a curve of degree
$d_0 = rs$ and genus $g_0 = (r + 1)(s - 1)$.

\paragraph{\boldmath If $s$ is even:} We pick general subsets of $3$ points $\Delta_C \subset C_0$
and of $r - 1$ points $\Delta_D \subset D_0$.
Write $\Lambda_C \simeq \pp^2$ and
$\Lambda_D \simeq \pp^{r - 2}$ for the linear spans of $f_0(\Delta_C)$ and $f_0(\Delta_D)$
respectively.
Note that a line passing through two points of $f_0(\Delta_C)$
is general (independent of $f_0(\Gamma_0)$)
by our inductive hypothesis;
in particular, $\Lambda_C \cap H$ contains a point which is general in $H$
(independent of $f_0(\Gamma_0)$),
and $\Lambda_C$ contains an additional independently general point in $\pp^r$.

Since $\Lambda_D \subset H$ is a
general hyperplane section, $p = \Lambda_C \cap \Lambda_D$
is a general point of $\Lambda_C \cap H$,
and is thus general in $H$ (independent of $f_0(\Gamma_0)$);
moreover if $q_1 \in \Lambda_C$ and $q_2 \in \Lambda_D$ are general,
then $q_1$ and $q_2$ are general in $\pp^r$ and $H$ respectively
(independent of $f_0(\Gamma_0)$ and $p$).
Let $C' \subset \Lambda_C$ be a rational normal curve (i.e.\ of degree $\dim \Lambda_C = 2$)
through $f_0(\Delta_C) \cup \{p, q_1\}$, and $D' \subset \Lambda_D$ be a rational normal curve
through $f_0(\Delta_D) \cup \{p, q_2\}$.
Then we will show that
\[f \colon (C_0 \cup_{\Delta_C} C') \cup_{\Gamma_0 \cup \{p\}} (D_0 \cup_{\Delta_D} D') \to \pp^r\]
gives the required curve.

Writing $f$ as $(C_0 \cup_{\Gamma_0} D_0) \cup_{\Delta_C \cup \Delta_D} (C' \cup_p D') \to \pp^r$,
iteratively applying Theorem~1.6 of \cite{firstpaper} shows it is a BN-curve.
In addition, applying Theorem~1.6 of \cite{firstpaper} shows
$f|_{C_0 \cup_{\Delta_C} C'}$ is a BN-curve to $\pp^r$,
and $f|_{D_0 \cup_{\Delta_D} D'}$ is a BN-curve to $H$,
as desired.

Moreover,
Lemmas~3.2, 3.3 and~3.4 of~\cite{firstpaper} imply
$H^1(N_{f|_{C' \cup_p D'}}) = 0$. Applying Lemma~\ref{lm:aut},
we conclude $H^1(N_{f|_{C' \cup_p D'}}(-\Delta_C - \Delta_D)) = 0$.
Together with
our inductive hypothesis,
using Lemmas~3.3 and~3.4 of \cite{firstpaper}, this implies
$H^1(N_f) = 0$ as desired.

Finally, we note that $f(C_0 \cup_{\Delta_C} C')$ passes through $1$ point in $\pp^r$ that is general
independent from $f(\Gamma_0 \cup \{p\})$, namely $q_1$;
and $f(D_0 \cup_{\Delta_D} D')$ passes through $1$ point in $H$
that is general
independent from $f(\Gamma_0 \cup \{p\}) \cup \{q_1\}$, namely $q_2$.

\paragraph{\boldmath If $s$ is odd:}
We pick a general subset $\Delta \subset C_0$ of $r + 1$ points,
a general point $q_1 \in f(C_0)$,
a general point $p \in D_0$, and a general point $q_2 \in \pp^r$.
By our inductive hypothesis, $q_1 \in \pp^r$ is general independent from $f_0(\Gamma_0)$,
and $f_0(p) \in H$ is general independent from $f_0(\Gamma_0)$.

Let $C'$ be a rational normal curve through $f(\Delta) \cup \{p, q_2\}$.
Then we will show that
\[f \colon (C_0 \cup_\Delta C') \cup_{\Gamma \cup \{p\}} D_0 \to \pp^r\]
gives the required curve.

Writing $f$ as $(C_0 \cup_{\Gamma_0} D_0) \cup_{\Delta \cup \{p\}} C' \to \pp^r$,
applying Theorem~1.6 of \cite{firstpaper} shows it is a BN-curve.
In addition, applying Theorem~1.6 of \cite{firstpaper} shows
$f|_{C_0 \cup_\Delta C'}$ is a BN-curve to $\pp^r$,
as desired.

Moreover,
Lemma~3.2 of~\cite{firstpaper} implies
$H^1(N_{f|_{C'}}(-\Delta-p)) = 0$. 
Together with
our inductive hypothesis,
using Lemmas~3.3 and~3.4 of \cite{firstpaper}, this implies
$H^1(N_f) = 0$ as desired.

Finally, we note that $f(C_0 \cup_\Delta C')$ passes through $2$ points in $\pp^r$ that are general
independent from $f(\Gamma_0 \cup \{p\})$, namely $\{q_1, q_2\}$.
\end{proof}

\begin{proof}[Proof of Theorem~\ref{main}]
Combining Lemma~\ref{lm:donemain} with Corollary~\ref{cor:needformain}
proves Theorem~\ref{main} unless we have
$r = 5$ and $d - (r - 2) \lfloor s/2 \rfloor - 2r + 1 = 2$;
or upon rearrangement $r = 5$ and $d = 3 \cdot \lfloor s/2 \rfloor + 11$.
Note that, in these cases,
\[s = g + 5 - d = \frac{d - 5 - \rho(d, g, 5)}{5} \leq \frac{d - 5}{5};\]
consequently
\[d \leq 3 \cdot \frac{d - 5}{10} + 11 \imp d \leq \frac{95}{7} < 14,\]
and so
\[s \leq \frac{d - 5}{5} < \frac{9}{5} < 2 \imp s = 1,\]
which gives $d = 11$ and thus $g = 7$.

It thus remains to prove Theorem~\ref{main} in the case $(d, g, r) = (11, 7, 5)$,
i.e.\ to prove that such a curve
can pass through $11$ general points.
But a curve of degree $10$ and genus $6$ can pass through $11$ general points
by work of Stevens \cite{can}, and the union of a curve of degree $10$ and genus $6$
with a $2$-secant line gives a curve of the required degree and genus, which is a BN-curve
by Theorem~1.6 of~\cite{rbn}.
\end{proof}

\section{The Twist}

We now turn to studying interpolation for the twist $N_C(-1)$.
In greater generality, we make the following definition:

\begin{defi} \label{def:good}
Let $d, g, r, n$ be nonnegative integers with $n \leq d$ and $\rho(d, g, r) \geq 0$;
take $f \colon C \to \pp^r$ to be a general BN-curve of degree $d$ and genus $g$.

We say $(d, g, r, n)$ is \emph{good} if the general hyperplane section $f(C) \cap H$
contains $d - n$ general points in $H$.

We say $(d, g, r, n)$ is \emph{excellent} if $N_f(-D)$ satisfies interpolation,
where $D \subset C$ is a divisor of degree $d - n$ supported in a general hyperplane section.
\end{defi}

By definition, the twist $N_f(-1)$ satisfies interpolation
if and only if $(d, g, r, 0)$ is excellent.
Mirroring our previous argument, we will begin from knowledge that
some range of degrees and genera are excellent:

\begin{prop} \label{base:excellent}
Let $d, g, r, n$ be nonnegative integers with
$n \leq d$ and $\rho(d, g, r) \geq 0$.
Then $(d, g, r, n)$ is excellent provided that:
\begin{align*}
d &\geq g + r, \\
(d, g, r) &\notin \{(5, 2, 3), (6, 2, 4), (7, 2, 5)\}, \quad \text{and} \\
2d + (r - 1) n &\geq (2r - 4)g - r + 3.
\end{align*}
\end{prop}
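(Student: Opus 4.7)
Since $d \geq g + r$, a general BN-curve $f \colon C \to \pp^r$ of degree $d$ and genus $g$ is nonspecial, placing us in the setting of \cite{firstpaper}, where interpolation results for $N_f$ and its twists by general effective divisors are established (modulo the three excluded triples $(5,2,3)$, $(6,2,4)$, $(7,2,5)$ appearing in the hypothesis). The goal is to leverage those results to handle $N_f(-D)$, where $D$ is not fully general but rather constrained to sit inside a general hyperplane section.

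The central reformulation is the identification
\[N_f(-D) \;\simeq\; N_f(-1)(D'),\]
where $D' := f^*H - D$ is the effective divisor of degree $n$ complementary to $D$ in the hyperplane section. Interpolation of $N_f(-D)$ is therefore the assertion that for a general effective divisor $E$, either $H^0(N_f(-1)(D'-E)) = 0$ or $H^1(N_f(-1)(D'-E)) = 0$. The numerical hypothesis $2d + (r-1)n \geq (2r-4)g - r + 3$ is equivalent to $\chi(N_f(-D)) \geq (r-1)g$, ensuring that the auxiliary divisor degrees which arise stay inside the effective range of \cite{firstpaper}'s interpolation statements.

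I would then split the argument by the size of $n$. When $n \leq r$, the divisor $D'$ of degree $n$ varies in an $r$-dimensional family as $H$ ranges over hyperplanes, so it can be treated as a general effective divisor of degree $n$; interpolation of $N_f(-1)(D')$ then follows from interpolation of $N_f(-1)$ in the nonspecial range, provided by \cite{firstpaper}. Dually, when $n \geq d - r$, the divisor $D$ of degree $d-n$ itself varies in a family of dimension matching its degree and can be treated as a general effective divisor, so interpolation of $N_f(-D)$ follows from interpolation of $N_f$.

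The main obstacle is the intermediate range $r < n < d - r$, where neither $D$ nor $D'$ is general of its degree on $C$. Here I would follow the template of Corollary~\ref{cor:needformain} and Lemma~\ref{lm:donemain}: degenerate $f$ to a reducible BN-curve $f_0 \colon C' \cup_\Gamma Y \to \pp^r$ with $f_0|_Y$ factoring through a hyperplane, with degrees matched so that the divisor $D$ is realized at the level of the components---for instance, with $f_0|_Y$ of degree $d-n$ so that $D$ corresponds to the full image of $Y$ inside the hyperplane section. Lemma~\ref{lm27} then decomposes the desired cohomology vanishing for $N_{f_0}(-D-E)$ into vanishing statements for $N_{f_0|_{C'}}$, $N_{f_0|_Y}$, and an auxiliary line bundle on $Y$; all three are accessible from the interpolation results of \cite{firstpaper}, since both components are nonspecial BN-curves (one in $\pp^r$, the other in a $\pp^{r-1}$) of strictly smaller degree. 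The excluded triples $(5,2,3)$, $(6,2,4)$, $(7,2,5)$ must be avoided precisely because they are the cases where \cite{firstpaper}'s interpolation for $N_f$ itself fails.
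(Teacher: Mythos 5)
The paper's proof of this proposition is a single sentence: it follows by combining Theorem~1.3 and Proposition~4.12 of~\cite{firstpaper}. In other words, the statement in the nonspecial range $d \geq g + r$ is taken directly off the shelf from the earlier paper, where interpolation for $N_f$ twisted by a divisor supported in a hyperplane section was already established for nonspecial curves. Your proposal instead attempts to re-derive this from scratch using the degeneration machinery of Lemma~\ref{lm27}, which is a genuinely different --- and much longer --- route.

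Beyond the efficiency difference, there are real gaps in the proposal. In the regime $n \leq r$ you claim that interpolation of $N_f(-1)(D')$ ``follows from interpolation of $N_f(-1)$'' once $D'$ is treated as a general divisor of degree $n$; but interpolation of a bundle $\mathcal{E}$ concerns $H^\bullet(\mathcal{E}(-E))$ for $E$ general \emph{effective}, whereas here one must control $H^\bullet(\mathcal{E}(D'-E))$ with $D'$ and $E$ two \emph{independent} general effective divisors. The divisor $D'-E$ is not a general effective (or general anti-effective) divisor of its degree, so the reduction is not automatic and requires a separate twist-stability lemma --- precisely the kind of statement packaged into Proposition~4.12 of \cite{firstpaper}, which you do not invoke. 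Dually, in the regime $n \geq d - r$ you need that a degree-$(d-n)$ subdivisor of a general hyperplane section is itself a general effective divisor on $C$; this is correct for $d-n\leq r$ by a nondegeneracy argument, but you assert it without proof. Finally, in the intermediate range your sketch of a degeneration $C' \cup_\Gamma Y$ with $Y$ in the hyperplane and of degree $d-n$ conflates the divisor $D$ (which is a Cartier divisor of degree $d-n$ on the curve) with the entire component $Y$, and leaves unspecified how Lemma~\ref{lm27}, which is about twists by Cartier divisors $E$, $F$ away from $\Gamma$, would realize a hyperplane-section twist across the two components; making this precise is the content of Corollary~\ref{cor:needformain-1}, not something one gets for free. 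In short, the outline is plausible but essentially re-proves a cited prior result while leaving the load-bearing steps unjustified.
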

\begin{proof}
This follows from combining
Theorem~1.3 and Proposition~4.12 of~\cite{firstpaper}.
\end{proof}

Our goal in this section is to prove Theorem~\ref{main-1}, by showing
that $N_f(-1)$ satisfies interpolation, subject to the inequality
\begin{equation} \label{assume}
(2r - 3)d - (r - 2)^2 g - 2r^2 + 3r - 9 \geq 0.
\end{equation}

\begin{lm} Theorem~\ref{main-1} holds for curves of degree $d$ and genus $g$ in $\pp^r$
unless
\begin{equation} \label{notdonealready}
2d \leq (2r - 4) g - r + 2
\end{equation}
and
\begin{equation} \label{gmin}
g \geq \begin{cases}
5 & \text{if $r \in \{5, 6\}$} \\
4 & \text{otherwise.}
\end{cases}.
\end{equation}
\end{lm}
\begin{proof}
By our assumption that $r \geq 5$, note that \eqref{assume} implies $d \geq g + r$;
in addition, \eqref{assume} is not satisfied for $(d, g, r) = (7, 2, 5)$.
In particular, if $2d \geq (2r - 4) g - r + 3$, then Proposition~\ref{base:excellent}
implies that $N_f(-1)$ satisfies interpolation as desired.
Note that, using \eqref{assume}, this implies
\[g \geq 3 + \frac{5r + 12}{2r^2 - 6r + 4},\]
which yields \eqref{gmin}.
\end{proof}

\begin{cor} \label{cor:needformain-1}
Let $f \colon C \cup_\Gamma D \to \pp^r$ be an interior BN-curve
of the form appearing in Lemma~\ref{lm27} (first paragraph),
with $f|_C$ and $f_D$ BN-curves too,
so that $D$ is of genus $t \geq 1$, and is of degree $(r - 3)t + 1$,
and $\Gamma$ is a set of $t$ general points in $H$. If
\begin{align}
t &\geq 2, \label{d1} \\
(r, t) &\notin \{(5, 2), (6, 2)\}, \label{d2} \\
g &\geq 2t - 1, \label{c1} \\
d &\geq g + r + 2 + (r - 5)t, \label{c2} \\
(d - (r - 3)t, g - 2t, r) &\neq (8, 1, 5), \quad \text{and} \label{c3}\\
2d - (2r - 4)g + (2r - 2)t &\geq r + 1, \label{c4}
\end{align}
then Theorem~\ref{main-1} holds for curves of degree $d$ and genus $g$ in $\pp^r$.
\end{cor}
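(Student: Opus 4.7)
The strategy is to apply Lemma~\ref{lm27} with divisors $E$ and $F$ chosen so that $N_f(-E-F)$ realizes $N_f(-1)(-B)$ for a general divisor $B$ of the largest permissible degree, thereby establishing the $H^1$-vanishing required for interpolation of $N_f(-1)$ on the special reducible curve $f$. By openness of interpolation in the Kontsevich moduli space, together with the hypothesis that $f$ is interior (so $f$ deforms to the general smooth BN-curve of degree $d$ and genus $g$), this transfers to the general smooth BN-curve and yields Theorem~\ref{main-1}.

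Concretely, I would fix a general hyperplane $H' \subset \pp^r$, denote its pullbacks to $C$ and $D$ by $H'_C$ and $H'_D$, and set $E = H'_C + B_C$ and $F = H'_D + B_D$ for general effective divisors $B_C \subset C$ and $B_D \subset D$. Since $N_f(-H'_C - H'_D) = N_f(-1)$ and $\oo_D(1) = \oo_D(H'_D)$, the three vanishing hypotheses of Lemma~\ref{lm27} (with $i = 1$) simplify to
\[H^1(N_{f_D}(-1)(-\Gamma - B_D)) = H^1(\oo_D(\Gamma - B_D)) = H^1(N_{f|_C}(-1)(-B_C)) = 0.\]
From the reducible structure, $f|_C$ has degree $d_C = d - (r-3)t - 1$ and $C$ has arithmetic genus $g_C = g - 2t + 1$. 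The hypotheses (c1)--(c4) translate termwise into the three numerical requirements of Proposition~\ref{base:excellent} applied to $(d_C, g_C, r, 0)$ (with (c1) guaranteeing $g_C \geq 0$, (c2) giving $d_C \geq g_C + r$, (c3) avoiding the bad triple after index-shift, and (c4) matching the final arithmetic bound), while (d1) and (d2) play the analogous role for $(d_D, g_D, r-1, 0) = ((r-3)t+1,\, t,\, r-1,\, 0)$. I therefore obtain interpolation for both $N_{f|_C}(-1)$ and $N_{f_D}(-1)$.

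A direct Euler-characteristic computation gives $\chi(N_{f_D}(-1)) = (r-2)(t+1)$, so interpolation on $D$ allows the removal of exactly $t + 1$ general points; I take $|B_D| = 1$, which matches this bound and also reduces the middle vanishing to $H^1$ of a general line bundle of degree $g_D - 1$ on $D$, which vanishes generically. Interpolation on $C$ then permits $|B_C| \leq \chi(N_{f|_C}(-1))/(r-1) = (2d - 2 - (r-3)g)/(r-1)$, and summing yields
\[|B| = |B_C| + |B_D| \;\leq\; \frac{2d - 2 - (r-3)g}{r-1} + 1 \;=\; \frac{2d - (r-3)(g-1)}{r-1} \;=\; \frac{\chi(N_f(-1))}{r-1},\]
which is exactly the interpolation threshold for $N_f(-1)$. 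The main delicate point is precisely this numerical alignment: the Euler characteristic on $C$ falls short of the global bound by exactly $r-1$, and the single unit of slack from $|B_D| = 1$ on $D$ (made available by the coincidence $\chi(N_{f_D}(-1)) = (r-2)(t+1)$ rather than $(r-2)t$) is what compensates for it, so Proposition~\ref{base:excellent} is barely strong enough on each component to close the argument.
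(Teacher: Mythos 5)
Your proposal follows essentially the same route as the paper: deform $f$ so that $f|_C$ and $(f_D,\Gamma)$ are general, pick $F = \oo_D(1)(p)$ (your $H'_D + B_D$ with $|B_D|=1$) and $E = \oo_C(1)(\Delta)$ (your $H'_C + B_C$), observe that both $N_{f_D}(-\Gamma-F)$ and $\oo_D(1)(\Gamma-F)$ have Euler characteristic zero so that interpolation of $N_{f_D}(-1)$ (via Proposition~\ref{base:excellent}, using (d1)--(d2)) gives vanishing of both $H^0$ and $H^1$ on the $D$-side, and then reduce via Lemma~\ref{lm27} to interpolation for $N_{f|_C}(-1)$, which is exactly what (c1)--(c4) deliver through Proposition~\ref{base:excellent}. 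Your accounting of the threshold shift $\chi(N_f(-1))/(r-1) = \chi(N_{f|_C}(-1))/(r-1) + 1$, absorbed by the one point removed on $D$, is the same coincidence the paper exploits; the only cosmetic difference is that you phrase the argument in terms of $H^1$-vanishing at the top degree, whereas the paper makes explicit that the zero Euler characteristic on the $D$-side gives both $H^0$- and $H^1$-vanishing so that Lemma~\ref{lm27} can be applied for each $i \in \{0,1\}$ (needed to verify interpolation at all degrees, not just up to the threshold) --- but this is implicit in your setup.
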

\begin{proof}
As in the proof of Corollary~\ref{cor:needformain},
we may deform $f$
to assume that $(f_D, \Gamma)$ is general in the component of $M_{t, t}(H, (r - 3)t + 1)$
corresponding to BN-curves,
and that $f|_C$ is general in the component of $M_{g - 2t + 1}(\pp^r, d - (r - 3)t - 1)$
corresponding to BN-curves.

Let $F = \oo_D(1)(p)$, and $E = \oo_C(1)(\Delta)$ for a general divisor $\Delta \subset C$.
Note with these choices that
\[\oo_D(1)(\Gamma-F) = \oo_D(\Gamma) (-p) \tand N_{f_D}(-\Gamma-F) = N_{f_D}(-1)(-\Gamma - p)\]
both have Euler characteristic zero.
Moreover, $N_{f_D}(-1)$ satisfies interpolation by Proposition~\ref{base:excellent},
provided that
\begin{align*}
(r - 3)t + 1 &\geq t + r - 1 \\
((r - 3)t + 1, t, r - 1) &\notin \{(5, 2, 3), (6, 2, 4), (7, 2, 5)\} \\
2((r - 3)t + 1) &\geq (2r - 6) t - r + 4.
\end{align*}
The last of these conditions is immediate for $r \geq 5$, while the first follows from \eqref{d1} and \eqref{d2},
and the second follows from \eqref{d2}.
We conclude that
\[H^0(\oo_D(1)(\Gamma-F)) = H^1(\oo_D(1)(\Gamma-F)) = H^0(N_{f_D}(-\Gamma-F)) = H^1(N_{f_D}(-\Gamma-F)) = 0.\]
In particular, applying Lemma~\ref{lm27},
we see that $N_f(-1)$ satisfies interpolation provided that
$N_{f|_C}(-1)$ does, which in turn (by Proposition~\ref{base:excellent}) follows from:
\begin{align*}
g - 2t + 1 &\geq 0 \\
d - (r - 3)t - 1 &\geq g - 2t + 1 + r, \\
(d - (r - 3)t - 1, g - 2t + 1, r) &\notin \{(5, 2, 3), (6, 2, 4), (7, 2, 5)\}, \quad \text{and} \\
2(d - (r - 3)t - 1) &\geq (2r - 4)(g - 2t + 1) - r + 3;
\end{align*}
or upon rearrangement, and using $r \geq 5$, the inequalities \eqref{c1}--\eqref{c4}.
\end{proof}

\begin{lm} \label{lm:formain-1-done}
If $d \geq (r - 2)t + 1$ in addition to the inequalities \eqref{d1}--\eqref{c4} are satisfied,
there exists a reducible interior BN-curve $f \colon C \cup_\Gamma D \to \pp^r$
of the form appearing in Corollary~\ref{cor:needformain-1}.
\end{lm}
\begin{proof}
We argue by induction on $t$
for a stronger hypothesis: That such a curve $f$
exists which, in addition, satisfies $H^1(N_f) = 0$,
and for which $f|_D$ passes through $2$ additional points in $H$
which are general independent of $\Gamma$.

Note first that \eqref{d1} and \eqref{d2}, in conjunction with Theorem~1.3 of \cite{firstpaper}, imply
there is a nonspecial curve $f_D \colon D \to H$ of degree $(r - 3)t + 1$ and genus $t$,
for which $N_{f_D}$ satisfies interpolation; as $r[(r - 3)t + 1] - (r - 4)(t - 1) \geq (r - 2) (t + 2)$,
the general such curve passes through at least $t + 2$ general points.

If $t \leq r + 2$, we let $f_C \colon C \to \pp^r$ be a general BN-curve of degree $d - (r - 3)t - 1$
and genus $g - 2t + 1$.
By \eqref{c1}--\eqref{c4} and Proposition~\ref{base:excellent}, we see that $f_C(C) \cap H$ consists of $d - (r - 3)t - 1$
general points; by our assumption that
$d \geq (r - 2) t + 1$, we have $d - (r - 3)t - 1 \geq t$.
From the previous paragraph,
there is a nonspecial curve $f_D \colon D \to H$ of degree $(r - 3)t + 1$ and genus $t$,
passing through a subset $\Gamma$ of $t$ points of $f_C(C) \cap H$,
and passing through $2$ additional general points in $H$.
Gluing $f_C$ to $f_D$ along $\Gamma$,
there exists such a curve $f \colon C \cup_\Gamma D \to \pp^r$, with $f|_C$ and $f_D$ general
BN-curves, meeting at a general set of $t$ points $\Gamma \subset H$.
The curve $f$ is a BN-curve by Theorem~1.9 of~\cite{rbn},
and is an interior curve with $H^1(N_f) = 0$
by combining Lemmas~3.2, 3.3, and~3.4 of~\cite{rbn}; by construction,
$f|_D$ passes through $2$ additional points in $H$
which are general independent of $\Gamma$.

For the inductive step, we suppose $t \geq r + 3$,
and let $(d_0, g_0, t_0) = (d - r + 3, g - 2, t - 1)$;
note that \eqref{d1} and \eqref{d2} are satisfied for $(d_0, g_0, t_0)$
by our assumption that $t \geq r + 3$,
and that \eqref{c1}--\eqref{c4} and $d_0 \geq (r - 2)t_0 + 1$ are immediate.
We may therefore let $f_0 \colon C_0 \cup_{\Gamma_0} D_0 \to \pp^r$
be such a reducible BN-curve of degree $d_0$ and genus $g_0$
with $\# \Gamma_0 = t_0$, which satisfies $H^1(N_{f_0}) = 0$,
and such that $f_0(D_0)$ passes through two additional points
that are general independent of $\Gamma_0$.
Let $\{q_1, q_2\}$ and $\{q_1', q_2'\}$ be two distinct such sets of points,
so both are general independent of $\Gamma_0$ (although of course not
necessarily independent from $\Gamma_0$ and each other).

Note that $\deg f_0|_{C_0} = (d - r + 3) - (r - 3)(t - 1) - 1 \geq t$ by assumption;
we may therefore pick a point $p \in (f_0(C_0) \cap H) \smallsetminus \Gamma_0$.
By \eqref{c1}--\eqref{c4} for $(d_0, g_0, t_0)$ established in the previous paragraph,
in conjunction with Proposition~\ref{base:excellent},
the hyperplane section $f_0(C_0) \cap H$ is general;
thus $p$ is general, independent of $\Gamma_0$,
and thus independent of $\Gamma_0 \cup \{q_1, q_2\}$.
Pick a linear subspace $\Lambda \simeq \pp^{r - 3} \subset H$ passing through $\{p, q_1, q_2\}$,
and let $D' \subset \Lambda$ be a rational normal curve through $\{p, q_1, q_2\}$.
We then claim
\[f \colon C_0 \cup_{\Gamma_0 \cup \{p\}} (D_0 \cup_{\{q_1, q_2\}} D') \to \pp^r\]
gives the required curve.

Writing $f$ as $(C_0 \cup_{\Gamma_0} D_0) \cup_{\{p, q_1, q_2\}} D' \to \pp^r$,
applying Theorem~1.6 of~\cite{rbn} shows it is a BN-curve.

Moreover, Lemmas~3.2 of \cite{rbn} implies $H^1(N_{f|_{D'}}(-p - q_1 - q_2)) = 0$.
Together with our inductive hypothesis, using Lemmas~3.3 and~3.4 of \cite{rbn},
this implies
$H^1(N_f) = 0$ as desired.

Finally, we note that $f(D_0 \cup_{\{q_1, q_2\}} D')$ passes through
two general points in $H$, independent of $\Gamma_0 \cup \{p\}$, namely $\{q_1', q_2'\}$.
\end{proof}

\begin{proof}[Proof of Theorem~\ref{main-1}]
Combining Corollary~\ref{cor:needformain-1} with Lemma~\ref{lm:formain-1-done},
all that remains to prove Theorem~\ref{main-1} is  
to solve a purely combinatorial problem: We must show that, subject to $r \geq 5$ and
\eqref{assume}--\eqref{gmin}, there exists an integer $t$ satisfying
$d \geq (r - 2)t + 1$ and \eqref{d1}--\eqref{c4}.
For this, we shall take 
\[t = \begin{cases}
s + 1 &\text{if $(r, s) \in \{(5, 2), (6, 2)\}$;} \\
s &\text{otherwise} \end{cases}
\twhere
s = \left\lceil\frac{(2r - 4)g - 2d + r + 1}{2r - 2} \right \rceil.\]
This automatically satisfies \eqref{d2}. By construction,
\[t \geq s \geq \frac{(2r - 4)g - 2d + r + 1}{2r - 2},\]
which implies \eqref{c4}.
Moreover, rearranging \eqref{notdonealready}, we obtain
\begin{equation} \label{s2}
\frac{(2r - 4)g - 2d + r + 1}{2r - 2} \geq \frac{2r - 1}{2r - 2} > 1 \imp s \geq 2,
\end{equation}
which, since $t \geq s$, implies \eqref{d1}.
If $(d, g, r) = ((r - 3)t + 8, 2t + 1, 5)$, then
\eqref{assume} becomes upon rearrangement $t \leq \frac{3}{4}$,
in contradiction to \eqref{d1}, which was just established;
this establishes \eqref{c3}.
In addition,
\begin{equation} \label{t-duh-bound}
s \leq \frac{(2r - 4)g - 2d + r + 1}{2r - 2} + 1 - \frac{1}{2r - 2}.
\end{equation}
Combined with \eqref{assume}, we obtain
\begin{align*}
s &\leq \frac{(2r - 4)g - 2d + r + 1}{2r - 2} + 1 - \frac{1}{2r - 2} + \frac{(2r - 3)d - (r - 2)^2 g - 2r^2 + 3r - 9}{(r - 2)(r - 1)} \\
&= \frac{d - 1}{r - 2} - \frac{r^2 + 16}{2 r^2 - 6 r + 4} \\
&\leq \frac{d - 1}{r - 2} - \begin{cases}
1 & \text{if $r \in \{5, 6\}$;} \\
0 & \text{otherwise.}
\end{cases}
\end{align*}
This implies $d \geq (r - 2)t + 1$, as desired.

For the \eqref{c1} and \eqref{c2}, the obvious upper bound \eqref{t-duh-bound}
will not suffice; instead we rearrange \eqref{assume} to produce
\[\frac{(2r - 4)g - 2d + r + 1}{2r - 2} \leq \frac{g - 1}{2} - \frac{(r - 1)g + 18}{4r^2 - 10r + 6},\]
which in turn implies $s \leq g/2$.
If $r \in \{5, 6\}$, then \eqref{gmin} gives $g \geq 5$.
Note that when $g = 5$, our bound $s \leq 5/2$ immediately gives $s \leq 4/2 = 2$.
Moreover, if $r \in \{5, 6\}$ and $g \geq 6$,
then 
$\frac{(r - 1)g + 18}{4r^2 - 10r + 6} > \frac{1}{2}$. We conclude that
\begin{equation} \label{t-bigr-bound}
s \leq \begin{cases}
\frac{g - 1}{2} & \text{if $r \in \{5, 6\}$;} \\
\frac{g}{2} & \text{otherwise.}
\end{cases}
\end{equation}
This bound implies \eqref{c1}. Moreover, for all $g \geq 8$, we have
\[\frac{(r - 2)^2 g + 2r^2 - 3r + 9}{2r - 3} \geq g + r + 2 + (r - 5) \cdot \begin{cases}
\frac{g + 1}{2} & \text{if $r \in \{5, 6\}$;} \\
\frac{g}{2} & \text{otherwise.}
\end{cases}\]
Combined with \eqref{assume} and \eqref{t-bigr-bound}, this yields \eqref{c2}
for $g \geq 8$.

From \eqref{gmin}, in order to complete the proof,
all that remains is to verify \eqref{c2} for $g \in \{4, 5, 6, 7\}$.
In these cases, \eqref{assume} becomes
\[d \geq \begin{cases}
3r - 5 + \frac{10}{2r - 3} & \text{if $g = 4$;} \\
\frac{7r - 13}{2} + \frac{r + 19}{4r - 6} & \text{if $g = 5$;} \\
4r - 8 + \frac{r + 9}{2r - 3} & \text{if $g = 6$;} \\
\frac{9 r - 18}{2} + \frac{r + 20}{4r - 6} & \text{if $g = 7$.}
\end{cases} \imp
d \geq \begin{cases}
3r - 4 & \text{if $g = 4$;} \\
\frac{7r - 12}{2} & \text{if $g = 5$;} \\
4r - 7 & \text{if $g = 6$;} \\
\frac{9 r - 17}{2} & \text{if $g = 7$.}
\end{cases}
\]
And \eqref{t-bigr-bound} becomes
\begin{equation} \label{s47}
s \leq \begin{cases}
2 & \text{if $g \in \{4, 5\}$;} \\
3 & \text{if $g \in \{6, 7\}$.}
\end{cases}
\end{equation}
This implies \eqref{c2}, except for the cases
$(r, g) \in \{(5, 5), (6, 4), (6, 5), (6, 6), (6, 7)\}$.
In those cases, \eqref{assume} becomes
\[d \geq \begin{cases}
\frac{89}{7} & \text{if $(r, g) = (5, 5)$;} \\
\frac{127}{9} & \text{if $(r, g) = (6, 4)$;} \\
\frac{143}{9} &\text{if $(r, g) = (6, 5)$;} \\
\frac{53}{3} & \text{if $(r, g) = (6, 6)$;} \\
\frac{175}{9} & \text{if $(r, g) = (6, 7)$.}
\end{cases} \imp
d \geq \begin{cases}
13 & \text{if $(r, g) = (5, 5)$;} \\
15 & \text{if $(r, g) = (6, 4)$;} \\
16 & \text{if $(r, g) = (6, 5)$;} \\
18 & \text{if $(r, g) = (6, 6)$;} \\
20 & \text{if $(r, g) = (6, 7)$.}
\end{cases}\]
Together with \eqref{s47} this implies the \eqref{c2} in these cases.
\end{proof}

\section{General Points in a Hyperplane Section \label{ghs}}

In this section, we investigate the number of general points contained
in the hyperplane section of a general BN-curve.
For the remainder of this section, we let $(d, g, r, n)$ denote nonnegative integers with
$\rho(d, g, r) \geq 0$ and $n \leq d$ and $r \geq 5$; our goal is to prove Theorem~\ref{main-1s},
which asserts that $(d, g, r, n)$ is good (c.f.\ Definition~\ref{def:good})
provided that
\[(2r - 3)(d + 1) - (r - 2)^2 (g - n) - 2r^2 + 3r - 9 \geq 0.\]

Our argument will be via induction, using the results of the preceding
section as a base case.
The various inductive arguments we shall use are as follows:

\begin{lm} \label{addcan}
Let $(d, g, r, n)$ be nonnegative integers with $\rho(d, g, r) \geq 0$
and $n \leq d - 4$.
Suppose that $g \geq 2r$ with strict inequality when $r = 5$,
and that $n \geq 2r - 6$. Then
$(d, g, r, n)$ is good if
$(d - 2r + 2, g', r, n - 2r + 6)$ is good, where
\[g' = \begin{cases}
g - 2r & \text{if $r \geq 6$;} \\
g - 11 & \text{if $r = 5$.}
\end{cases}\]
\end{lm}
\begin{proof}
Note that our assumptions imply $g' \geq 0$, and that $n' := n - 2r + 6$
and $d' := d - 2r + 2$ satisfy $0 \leq n' \leq d'$. Moreover,
\[\rho(d', g', r) \geq (r + 1)(d - 2r + 2) - r(g - 2r) - r(r + 1)
= (r + 1)d - rg - r(r + 1) + 2
\geq 0.\]
In particular, $(d', g', r, n')$ are nonnegative integers with $0 \leq n' \leq d'$
and $\rho(d', g', r) \geq 0$.
By assumption, $(d', g', r, n')$ is good.

So let $f_1 \colon C \to \pp^r$ be a general BN-curve
of degree $d'$ and genus $g'$,
whose hyperplane section $f_1(C) \cap H$
contains a set $S$ of $d' - n' = d - n - 4$ general points.
Pick a set $T$ of
$4$ independently general points in $H$, and let $H'$ be a general hyperplane
containing $T$ (in particular $H'$ is independently general from $C$,
and from $H'$ since $r \geq 5$).

Since $\rho(d, g, r) \geq 0$, we have
\[d \geq \frac{rg + r(r + 1)}{r + 1} \geq \frac{r \cdot 2r + r(r + 1)}{r + 1} > 3r - 2 \imp d \geq 3r - 1 \imp d' \geq r + 1.\]
Similarly, when $r = 5$, we have 
\[d \geq \frac{5g + 30}{6} \geq \frac{5 \cdot 11 + 30}{6} > 14 \imp d \geq 15 \imp d' \geq 7.\]
Putting these together, we conclude $d' \geq c$, where we define
\[c := \begin{cases}
r + 1 & \text{if $r > 5$;} \\
7 & \text{if $r = 5$.}
\end{cases}\]

By Lemma~6.1 of~\cite{rbn}, the hyperplane section $f_1(C) \cap H'$ contains
a set $\Gamma$ of $c$ general points.
By \cite{can}, there is a canonical curve $f_2 \colon D \to H'$
(of genus $r$) passing through $\Gamma$. We may then construct
$(f_1 \cup f_2) \colon C \cup_{\Gamma} D \to \pp^r$,
which is a BN-curve, by Theorem~1.9 of~\cite{rbn},
and is of degree $d$ and genus $g$
passing through the set $S \cup T \subset H$ of $d - n$ general points as desired.
\end{proof}

\begin{lm} \label{addhalf}
Write
\[a = \left\lceil \frac{r - 2}{2} \right \rceil,\]
and suppose $g \geq a + 1$ and $n \geq a$.
Then $(d, g, r, n)$ is good if
$(d - a, g - a - 1, r, n - a)$ is good.
\end{lm}
\begin{proof}
Note that our assumptions imply $g' := g - a - 1 \geq 0$, and that $n' := n - a$
and $d' := d - a$ satisfy $0 \leq n' \leq d'$. Moreover,
\[\rho(d', g', r) = (r + 1)(d - a) - r(g - a - 1) - r(r + 1)
= (r + 1)d - rg - r(r + 1) + r - a
\geq r - a \geq 0.\]
In particular, $(d', g', r, n')$ are nonnegative integers with $0 \leq n' \leq d'$
and $\rho(d', g', r) \geq 0$.
By assumption, $(d', g', r, n')$ is good.

So let $f \colon C \to \pp^r$ be a general BN-curve
of degree $d'$ and genus $g'$,
whose hyperplane section $f(C) \cap H$
contains $d' - n' = d - n$ general points.

By Theorem~1.8 of~\cite{rbn}, there exists a BN-curve
$\hat{f} \colon C \cup_\Gamma \pp^1 \to \pp^r$
with $\# \Gamma = a + 2$ and $\hat{f}|_{\pp^1}$ of degree $a$,
such that $\hat{f}|_C = f$.
In particular, $\hat{f}$ is a BN-curve of degree $d' + a = d$
and genus $g' + a + 1 = g$
whose hyperplane section contains the hyperplane section of $f$,
and thus contains $d - n$ general points as desired.
\end{proof}

\begin{lm} \label{addline}
Suppose that $g \geq 1$, and $n \geq 1$, and $\rho(d, g, r) \geq 1$.
Then $(d, g, r, n)$ is good if
$(d - 1, g - 1, r, n - 1)$ is good.
\end{lm}
\begin{proof}
Note that our assumptions imply $g' := g - 1 \geq 0$, and that $n' := n - 1$
and $d' := d - 1$ satisfy $0 \leq n' \leq d'$. Moreover,
\[\rho(d', g', r) = (r + 1)(d - 1) - r(g - 1) - r(r + 1)
= (r + 1)d - rg - r(r + 1) - 1
\geq 0.\]
In particular, $(d', g', r, n')$ are nonnegative integers with $0 \leq n' \leq d'$
and $\rho(d', g', r) \geq 0$.
By assumption, $(d', g', r, n')$ is good.

So let $f \colon C \to \pp^r$ be a general BN-curve
of degree $d'$ and genus $g'$,
whose hyperplane section $f(C) \cap H$
contains $d' - n' = d - n$ general points.

Pick $\{p, q\} \subset C$ general.
By Theorem~1.6 of~\cite{rbn}, the curve
$\hat{f} \colon C \cup_{\{p, q\}} \pp^1 \to \pp^r$,
where $\hat{f}|_{\pp^1}$ is a line,
is a BN-curve.
It is evidently of degree $d' + 1 = d$
and genus $g' + 1 = g$,
and its hyperplane section contains the hyperplane section of $f$,
and thus contains $d - n$ general points as desired.
\end{proof}

\begin{lm} \label{addline-1}
If $(d, g, r, n)$ is good, then so is $(d + 1, g, r, n)$.
\end{lm}
\begin{proof}
Let $f \colon C \to \pp^r$ be a general BN-curve
of degree $d$ and genus $g$,
whose hyperplane section $f(C) \cap H$
contains $d - n$ general points.

Pick a general point $p \in C$.
By Theorem~1.6 of~\cite{rbn}, the curve
$\hat{f} \colon C \cup_{\{p\}} \pp^1 \to \pp^r$,
where $\hat{f}|_{\pp^1}$ is a line,
is a BN-curve.
It is evidently of degree $d + 1$
and genus $g$,
and its hyperplane section contains the hyperplane section of $f$,
plus the independently general point $\hat{f}(\pp^1) \cap H$,
and thus contains $d + 1 - n$ general points as desired.
\end{proof}

\begin{lm} \label{exc-good}
Suppose that, for some integer $b \geq 0$,
we have $b \leq d - n$, and $b \leq g$, and $\rho(d, g, r) \geq b$, and that
\[2d + (r - 1) n - (r - 3)g - 4b - 2 \geq  0.\]
Then $(d, g, r, n)$ is good if
$(d - b, g - b, r, n)$ is excellent.
\end{lm}
\begin{proof}
If $b = 0$, the result is obvious; we thus suppose $b \geq 1$.
Note that our assumptions imply $g' := g - b \geq 0$, and that $n$
and $d' := d - b$ satisfy $0 \leq n \leq d'$. Moreover,
\[\rho(d', g', r) = (r + 1)(d - 1) - r(g - 1) - r(r + 1)
= (r + 1)d - rg - r(r + 1) - 1
\geq 0.\]
In particular, $(d', g', r, n)$ are nonnegative integers with $0 \leq n \leq d'$
and $\rho(d', g', r) \geq 0$.
By assumption, $(d', g', r, n)$ is excellent.

So let $f \colon C \to \pp^r$ be a general BN-curve
of degree $d'$ and genus $g'$,
whose hyperplane section
contains a set $D$ of $d' - n' = d - n - b$ general points.
By assumption,
\[\chi(N_f(-D)) = (r + 1)(d - b) - (r - 3)(g - b - 1) - (r - 1) (d - b - n) \geq (r - 1)(b + 1),\]
and so $f$ passes through a set $S$ of $b + 1$ points
that are general, independent of $D$.

Let $C'$ be a rational curve of degree $b$ through $S$.
Then $\hat{f} \colon C \cup_S C' \to \pp^r$
is a BN-curve by Theorem~1.6 of~\cite{rbn}.
It is evidently of degree $d' + b = d$ and genus $g' + b = g$,
and its hyperplane section is the union of the hyperplane sections
of $\hat{f}|_C$ and $\hat{f}|_{C'}$, which contain independently general
sets of $d - b - n$ and $b$ points by construction, for $d - n$
general points in total.
\end{proof}

\begin{lm}
Suppose that nonnegative integers $d_1, g_1, d_2, g_2, n_1, n_2, k$
with $\rho(d_i, g_i, r) \geq 0$ and $n_i \leq d_i$ for $i \in \{1, 2\}$, and with $k \geq 1$, satisfy
\begin{align*}
(r + 1) d_1 - r g_1 + r &\geq rk \\
2 d_2 - (r - 3) (g_2 - 1) &\geq (r - 1)(k - n).
\end{align*}
Then $(d_1 + d_2, g_1 + g_2 + k - 1, r, n_1 + n_2)$ is good if $(d_1, g_1, r, n_1)$ is good and $(d_2, g_2, r, n_2)$
is excellent.
\end{lm}
\begin{proof}
Since $(d_1, g_1, r, n_1)$ is good by assumption,
we may let $f_1 \colon C_1 \to \pp^r$ be a general BN-curve
of degree $d_1$ and genus $g_1$,
whose hyperplane section contains a set $D_1$ of
$d_1 - n_1$ general points.
Since $(r + 1) d_1 - r g_1 + r \geq rk$,
Corollary~1.3 of~\cite{tang} implies that
$f_1$ passes through a set $S$ of $k$ general points.

Since by assumption
$(d_2, g_2, r, n_2)$ is excellent and
$2 d_2 - (r - 3) (g_2 - 1) \geq (r - 1)(k - n)$,
we may let $f_2 \colon C_2 \to \pp^r$ be a general BN-curve
of degree $d_2$ and genus $g_2$, passing through $S$
and an independently general set $D_2$ of $d_2 - n_2$ general points.

By Theorem~1.6 of~\cite{rbn}, the curve $C_1 \cup_S C_2 \to \pp^r$
is a BN-curve; by inspection, its hyperplane section contains a set
$D_1 \cup D_2$ of $d_1 + d_2 - n_1 - n_2$ general points.
\end{proof}

The remainder of the paper is a purely combinatorial argument to show that
the above inductive arguments, together with the base cases established by Theorem~\ref{main-1},
suffice to prove Theorem~\ref{main-1s}.

\begin{lm} \label{wh} Suppose that $(d, g, r, n)$ satisfy the hypothesis of Theorem~\ref{main-1s}.
Write
\[a = \left\lceil \frac{r - 2}{2} \right\rceil.\]
Suppose that:
\begin{itemize}
\item Either $g \leq 2r$ with strict inequality for $r \geq 6$,
or $n \leq 2r - 7$;
\item If $r \leq 39$, then
\[g \geq \frac{(5r - 7) n - (2r^2 - 9r + 9) \cdot \left\lfloor\frac{n}{a}\right \rfloor - 4r^2 + 94r - 150}{r - 1}.\]
\end{itemize}
Then $(d, g, r, n)$ is good.
In other words, Theorem~\ref{main-1s} holds subject to the above hypotheses.
\end{lm}
\begin{proof}
We apply Lemma~\ref{addhalf} $x$ times, where
\[x = \min\left(\left\lfloor\frac{g}{a + 1}\right \rfloor, \left\lfloor\frac{n}{a}\right \rfloor\right),\]
followed by Lemma~\ref{addline} $y$ times where
\[y = \min(g - (a + 1)x, n - ax),\]
followed by Lemma~\ref{exc-good} with $b = z$ where
\[z = \min(g - (a + 1)x - y, d - n, 10).\] 
This can be done so long as
\begin{align*}
\rho(d - ax, g - (a + 1)x, r) &\geq y + z \\
2(d - ax - y) + (r - 1) (n - ax - y) - (r - 3)(g - (a + 1)x - y) - 4z - 2 &\geq 0;
\end{align*}
if these inequalities hold, then we are reduced to showing that
\[(d', g', r, n') := (d - ax - y - z, g - (a + 1)x - y - z, r, n - ax - y)\]
is excellent.

By definition of $y$, either $n' = n - ax - y = 0$,
or $g - (a + 1)x - y = 0$; in the second case, by definition of $z$,
we also have $z = 0$ and so $g' = g - (a + 1)x - y - z = 0$.

Next, by definition of $z$, either $z = d - n$ (which gives $d' = n'$), or $z = 10$, or we have
$z = g - (a + 1)x - y$ (which gives $g' = 0$).
Since $(d', g', r, n')$ is automatically excellent when $g' = 0$ by
Proposition~\ref{base:excellent},
and we cannot have $d' = n'$ if $n' = 0$,
the only case that
remains to consider is when $n' = 0$ and $z = 10$ but $g' \neq 0$.
Note that this forces $x = \lfloor n / a \rfloor$ and $g' = g - n - x - 10$;
in particular, $g \geq n + 11$, which since either $g \leq 2r$ or $n \leq 2r - 7$, forces $n \leq 2r - 7$.

In this case, we can invoke Theorem~\ref{main-1} to conclude that
$(d', g', r, n')$ is excellent, as desired, provided that
\[(2r - 3) (d - ax - y - z) - (r - 2)^2 (g - (a + 1)x - y - z) - 2r^2 + 3r - 9 \geq 0.\]
Combining all these inequalities and substituting $y = n - ax$ and $z = 10$, all that must be done
to complete the proof is to show:
\begin{align*}
rx + (r + 1) d - rg - n - r^2 - r - 10 &\geq 0 \\
(r - 3) x + 2 d - (r - 3) g + (r - 5) n - 42 &\geq 0 \\
(2r - 3) d + (r - 2)^2 (x - g) + (r^2 - 6 r + 7) n + 8r^2 - 57r + 61 &\geq 0.
\end{align*}
Using the hypothesis of Theorem~\ref{main-1s} to bound $d$ from below,
and recalling that $g' = g - n - x - 10$, these inequalities follow from
\begin{align}
\frac{r^3 - 5r^2 + 3r + 4}{r + 1} g' + 10r^2 - 62r + 82 &\geq (2r - 3)n - (r - 2)^2 x \label{df} \\
\frac{r - 1}{2} g' + 2 r^2 - 42 r + 70 &\geq (2r - 3)n - (r - 2)^2 x \label{ds} \\
10 r^2 - 62 r + 73 &\geq (2r - 3) n - (r - 2)^2 x. \label{dt}
\end{align}
Note that $(r - 2)^2 / (2r - 3) \leq (r - 2)/2 \leq a$,
and that
$n \leq 2r - 7 < 4 \cdot (r - 2)/2 \leq 4a$.
Since $x = \lfloor n/a \rfloor$, this implies
\[(2r - 3) n - (r - 2)^2 x \leq (2r - 3) \cdot (4a - 1) - (r - 2)^2 \cdot 3 \leq (2r - 3) \cdot (2r - 3) - (r - 2)^2 \cdot 3 = r^2 - 3.\]
Since $g' \geq 1$, this implies \eqref{df} for $r \geq 5$, and \eqref{ds}
for $r \geq 40$, and \eqref{dt} for $r \geq 6$.
When $r = 5$, we have $a = 2$ and $n \leq 3$;
for each of these four values of $n$, we easily verify \eqref{dt}.
It thus remains to verify \eqref{ds} for 
$r \leq 39$. But upon rearrangement (using $g' = g - n - x - 10$ and $n = \lfloor n/a \rfloor$),
this is exactly our assumption that
\[g \geq \frac{(5r - 7) n - (2r^2 - 9r + 9) \cdot \left\lfloor \frac{n}{a}\right\rfloor - 4r^2 + 94r - 150}{r - 1}. \qedhere\]
\end{proof}

\begin{cor} \label{done} Suppose that $(d, g, r, n)$ satisfy the hypothesis of Theorem~\ref{main-1s}.
If either $g \leq 2r$ with strict inequality for $r \geq 6$, or $n \leq 2r - 7$,
then $(d, g, r, n)$ is good. In other words, Theorem~\ref{main-1s} holds subject to the hypothesis
that either $g \leq 2r$ with strict inequality for $r \geq 6$, or $n \leq 2r - 7$.
\end{cor}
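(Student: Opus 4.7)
The plan is to combine Lemma~\ref{wh} with several reductions to cover the finitely many residual cases it fails to handle.

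First, for $r \geq 40$, the lower bound on $g$ in hypothesis (B) of Lemma~\ref{wh} has negative right-hand side for $n = 0$ and remains dominated by the corollary's hypothesis (A) for larger $n$, so (B) is essentially vacuous; Lemma~\ref{wh} directly gives Corollary~\ref{done} under the hypothesis (A) shared by both statements. I would therefore assume $5 \leq r \leq 39$ for the remainder of the argument.

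If the $r \leq 39$ bound (B) of Lemma~\ref{wh} also happens to hold, we are again done by Lemma~\ref{wh}. Otherwise, the failure of (B) combined with hypothesis (A)---namely $g \leq 2r$ with strict inequality for $r \geq 6$, or $n \leq 2r-7$---bounds $g$, $r$, and $n$ explicitly in a finite range. The parameter $d$ is still free, but Lemma~\ref{addline-1} allows us to raise $d$; equivalently, we may reduce to the case where $d$ equals the minimum value consistent with the hypothesis of Theorem~\ref{main-1s}. This leaves only finitely many quadruples $(d, g, r, n)$ to dispatch.

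For each such exceptional quadruple, the strategy is to apply one of the inductive lemmas to land in a previously covered case. The first attempt is Lemma~\ref{exc-good} with a suitable $b$: this reduces goodness of $(d, g, r, n)$ to excellence of $(d-b, g-b, r, n)$, and Theorem~\ref{main-1} provides the needed excellence precisely when $(2r-3)(d-b) - (r-2)^2(g-b) - 2r^2 + 3r - 9 \geq 0$, which can often be arranged by choosing $b$ near the upper limit allowed by Lemma~\ref{exc-good}. When this direct route fails for a particular quadruple, I would fall back on Lemma~\ref{addline} (to decrement $g$, $n$, $d$ simultaneously until we enter the range of Lemma~\ref{wh}) or on the gluing lemma preceding Lemma~\ref{wh}, splitting $(d, g, r, n)$ into a small good piece covered by a direct base case plus a large excellent piece covered by Theorem~\ref{main-1}.

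The main obstacle is the combinatorial bookkeeping: for every residual quadruple one must check that at least one of these reductions lands in an already-handled case, and verify the associated inequalities (e.g.\ the $\rho$-positivity and inequality constraints in Lemma~\ref{exc-good}, or the degree/genus conditions in the gluing lemma). Because the residual set is explicit and finite, this verification is routine but must be carried out carefully; I expect no conceptual new input beyond what is already assembled in the preceding lemmas.
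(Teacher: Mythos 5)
Your proposal follows essentially the same route as the paper: use Lemma~\ref{wh} to handle all but finitely many $(g, r, n)$ with $5 \leq r \leq 39$, $n \leq 2r-7$, and $g$ below the explicit bound, then use Lemma~\ref{addline-1} to reduce to the minimal $d$ consistent with the hypothesis of Theorem~\ref{main-1s}, leaving a finite check. The paper dispatches that finite check by the Python code in Appendix~\ref{appendix:code} rather than by hand, but the reduction is identical; one small remark is that for $r \geq 40$ you need no argument that the $g$-bound in Lemma~\ref{wh} is ``essentially vacuous,'' since that hypothesis is literally only imposed when $r \leq 39$.
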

\begin{proof}
From Lemma~\ref{wh}, it remains to check the finitely
many values of $(g, r, n)$ with $5 \leq r \leq 39$ and $n \leq 2r - 7$ and 
\[g \leq \frac{(5r - 7) n - (2r^2 - 9r + 9) \cdot \left\lfloor \frac{n}{a}\right\rfloor - 4r^2 + 94r - 151}{r - 1} \twhere a = \left\lceil \frac{r - 2}{2} \right \rceil.\]
And for each such triple, by Lemma~\ref{addline-1}, we just have to
check the minimal value of $d$ satisfying the hypotheses of Theorem~\ref{main-1s};
all that remains is thus a finite computation, 
which is done in Appendix~\ref{appendix:code}.
\end{proof}

\begin{proof}[Proof of Theorem~\ref{main-1s}]
We argue by induction on $d$, with bases cases given by Corollary~\ref{done}.
For the inductive step, we may suppose $g \geq 2r$ with strict inequality for $r = 5$,
and $n \geq 2r - 6$. In particular, we may apply Lemma~\ref{addcan},
which asserts that $(d, g, r, n)$ is good provided that $(d - 2r + 2, g', r, n - 2r + 6)$ is good.
Note that
\begin{multline*}
(2r - 3)((d - 2r + 2) + 1) - (r - 2)^2 (g' - (n - 2r + 6)) - 2r^2 + 3r - 9 \\
= (2r - 3)(d + 1) - (r - 2)^2 (g - n) - 2r^2 + 3r - 9 + \begin{cases}
2r^2 - 14r + 18 & \text{if $r \geq 6$;} \\
7 & \text{if $r = 5$.}
\end{cases}
\end{multline*}
Since $(d, g, r, n)$ satisfies the assumptions of Theorem~\ref{main-1s},
so does
$(d - 2r + 2, g', r, n - 2r + 6)$; thus
$(d - 2r + 2, g', r, n - 2r + 6)$ 
is good by our inductive hypothesis as desired.
\end{proof}

\appendix 

\section{Code for Theorem~\ref{main-1s} \label{appendix:code}}

In this section, we give python code to do the finite
computations described in Section~\ref{ghs};
running this code produces no output, thus verifying Theorem~\ref{main-1s}
in these cases.

\lstinputlisting{ibe.py}

\bibliography{ibe}{}
\bibliographystyle{plain}

\end{document}